\theoremstyle{plain}
\theoremstyle{definition}
\theoremstyle{remark}
\numberwithin{equation}{section}
\begin{document}

\title[Non-planar interfaces]{Nucleation of austenite in mechanically stabilized martensite by localized heating}

\author{John M. Ball}
\address{John M. Ball: Mathematical Institute, University of Oxford, 24--29 St Giles', Oxford OX1 3LB, United Kingdom.}
\email{ball@maths.ox.ac.uk}

\author{Konstantinos Koumatos}
\address{Konstantinos Koumatos: Mathematical Institute, University of Oxford, 24--29 St Giles', Oxford OX1 3LB, United Kingdom.}
\email{koumatos@maths.ox.ac.uk}

\author{Hanu\v s Seiner}
\address{Hanu\v s Seiner: Institute of Thermomechanics ASCR, Dolej\v skova 5, 182 00 Prague 8, Czech Republic}
\email{hseiner@it.cas.cz}

\begin{abstract}
The nucleation of bcc austenite in a single crystal of a mechanically stabilized 2H-martensite of Cu-Al-Ni shape-memory alloy is studied. The nucleation process is induced by localized heating and observed by optical microscopy. It is observed that nucleation occurs after a time delay and that the nucleation points are always located at one of the corners of the sample (a rectangular bar in the austenite), regardless of where the localized heating is applied.

Using a simplified nonlinear elasticity model, we propose an explanation for the location of the nucleation points, by showing that the martensite is a local minimizer of the energy with respect to localized variations in the interior, on faces and edges of the sample, but not at some corners, where a localized microstructure can lower the energy.
\vspace{4pt}

\noindent\textsc{Keywords:} phase transitions; shape memory; microstructure; Young measures; quasiconvexity
\vspace{2pt}

\noindent\textsc{MSC (2010): 49K10, 49S05, 74B20, 74N15.}
\end{abstract}

\maketitle

\section{Introduction}
\label{sec:intro}

The shape-recovery process, i.e. the thermally driven transition from the low temperature phase (martensite) into the high-temperature phase (austenite), is a fundamental part of the shape-memory effect. For many shape-memory alloys, the critical temperature for initiation of the shape-recovery process is strongly dependent on the microstructure of martensite entering the transition. When the heating is applied on a thermally induced martensitic microstructure obtained by the stress-free cooling of the austenitic phase, the transition starts at a certain temperature, usually denoted as $A_S$ (austenite start).  However, if the material in the martensitic phase is, prior to the heating, deformed (i.e. if the microstructure is reoriented by application of external mechanical loads), this critical temperature can be shifted significantly upwards. This effect is called the mechanical stabilization of martensite and has been documented for both single crystals and polycrystalline shape-memory alloys (SMAs) \cite{stab_SC,stab_PC}.

The difference between the shape-recovery process from the mechanically stabilized martensite and from the thermally induced martensitic microstructure was clearly illustrated by acoustic emission (AE) measurements by Landa et al. \cite{ISPMA}. The AE method is based on detecting and counting the number of acoustic signals emitted by the material during the course of the transition (see \cite{Cernoch_JalCom, J_AE} for an example of the use of AE for characterization of the martensitic transitions in SMAs). Fig.~\ref{fig1_HS} (taken from \cite{ISPMA} with courtesy of M. Landa) gives an illustrative example of the comparison of AE records obtained for the same single crystal of the Cu-Al-Ni alloy undergoing the transition in these two different regimes. For the thermally induced microstructure, more than 90\% of AE events occur in a temperature range between the austenite start temperature $A_S$ and the austenite finish temperature $A_F$ , which is in agreement with DSC measurements for the same material\footnote{These temperatures, however, differ from the transition temperatures of the material used in the experimental section of this paper, since the heat treatment of the material used by Landa et al. \cite{ISPMA} was slightly different.}. The transition in this temperature interval is preceded by a small number of events (less than 10\%) appearing below $A_S$. These events can be ascribed to the formation of nuclei of austenite in the thermally induced martensitic microstructure. Above $A_S$, these nuclei grow successively through the material and provide the transition. For the stabilized martensite, more than 90\% of the events are recorded within a very narrow temperature interval. As observed by Seiner et al.~\cite{PhaseTran}, the transition from the mechanically stabilized martensite is provided by the formation and propagation of special interfacial microstructures, which interpolate between austenite and mechanically stabilized martensite ensuring the kinematically compatible connection between them. These microstructures are able to exist and propagate in a wide range of temperatures and thermal gradients \cite{PrEASc}. Thus, the AE record for the stabilized martensite can be interpreted as follows: the small number of AE events detected below the narrow interval corresponds to the nucleation of austenite. As soon as the nucleation barrier is overcome, the interfacial microstructure propagates abruptly through the specimen and no further increase of the temperature is necessary. This shows how essential the nucleation process is for the effect of mechanical stabilization and the shape-recovery process in general.

This mechanical stabilization effect resulted in a rather surprising nucleation mechanism of austenite in a Cu-Al-Ni single crystal. In a simplified setting, we provide a mathematical explanation for this mechanism, based on ideas of the modern calculus of variations.

\begin{figure}[ht]
\centering
\includegraphics[width=0.9\columnwidth]{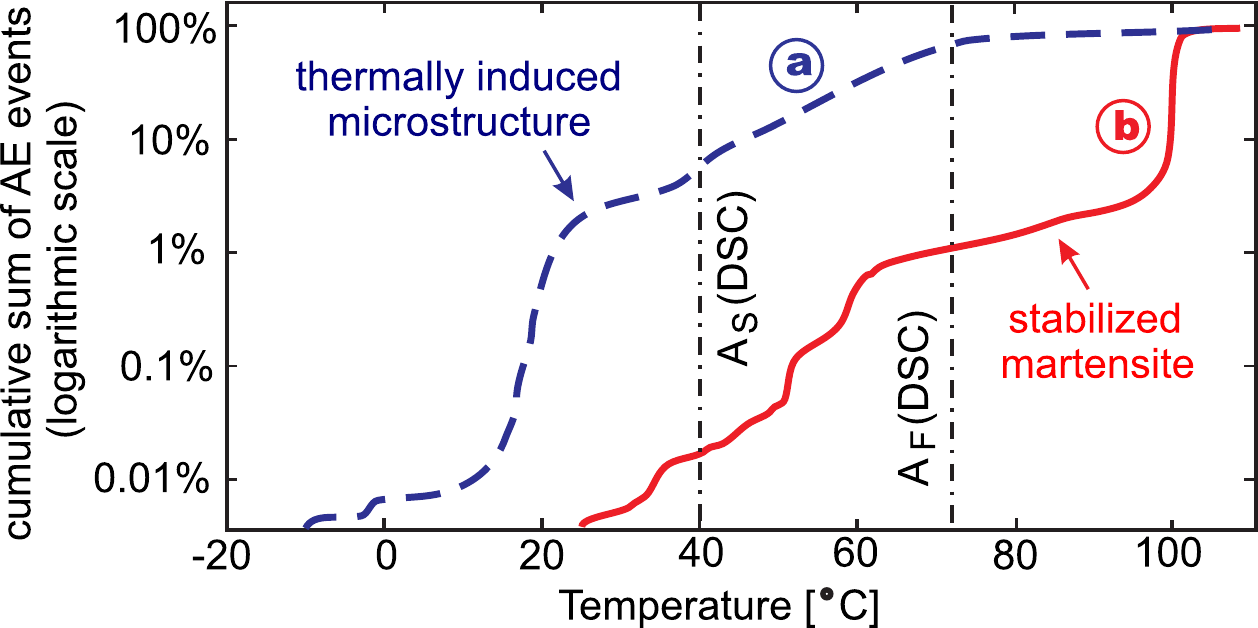}
\caption{Illustrative comparison of AE records for the transitions of Cu-Al-Ni single crystal from the thermally induced and mechanically stabilized states. (a) gradual increase of the number of events between $A_S$ and $A_F$ for the thermally induced microstructure; (b) abrupt transition of the stabilized martensite within a narrow temperature interval. The 100\% corresponds to $\sim$10$^7$ events.}
\label{fig1_HS}
\end{figure}

\section{Experimental observations}
\label{sec:experiment}

The observations that follow were made on a single crystal of Cu-Al-Ni, prepared by the Bridgeman method at the Institute of Physics, ASCR. The specimen was a prismatic bar of dimensions 12$\times$3$\times$3mm$^3$ in the austenite with edges approximately along the principal directions of the austenitic phase (see \cite{PhaseTran} for a detailed description). The martensite-to-austenite transition temperatures determined by DSC were $A_S=-6^\circ{\rm C}$ and $A_F=22^\circ{\rm C}$. The critical temperature $T_C$ for the transition from the stabilized martensite induced by homogeneous heating for this specimen was $\sim $60$^\circ$C. This was estimated from optical observations of the transition in this specimen with one of its faces laid on and thermally contacted with a gradually heated Peltier cell, using a heat conducting gel.

The specimen was subjected to the following experimental procedure: 
\begin{enumerate}
\item[a)] by unidirectional compression along its longest edge, the specimen was transformed into a single variant of mechanically stabilized 2H martensite. Due to the mechanical stabilization effect the reverse transition did not occur during unloading.
\item[b)]the specimen was then freely laid on a slightly pre-stressed, free-standing polyethylene (PE) foil (thickness 10$\mu$m, temperature resistance up to 140$^\circ$C). This ensured that there were minimal mechanical constraints to the specimen during the observations.
\item[c)]the specimen was locally heated by touching its surface with an ohmically heated tip of the Solomon SL-30 (Digital) soldering iron with temperature electronically controlled to be 200$^\circ$C (control accuracy $\sim \pm 5^\circ$C), i.e. significantly above the $A_S$ and $T_C$ temperatures. The nucleation of austenite was optically observed and recorded by a conventional CCD camera (7$\times$ optical zoom, 25 frames/second, PAL resolution with \verb#mpeg# compression).
\end{enumerate}

The localized heating was applied in three different ways: (i) with the tip touching one of the corners surrounding the upper face; (ii) with the tip touching one of the edges, approximately in the middle between two corners; (iii) with the tip touching approximately at the centre of the upper face.  These experiments were repeated for various orientations of the specimens, i.e. with various faces chosen to be the upper (observed) ones.

When heating was applied at a corner, the nucleation was always induced exactly at that corner and occurred nearly immediately after touching the specimen with the tip. When heating either an edge or the centre of the upper face, the nucleation occurred at one of the corners as well, i.e. the localized heating did not result in formation of the nucleus under the tip. Moreover, the nucleus was only observable after 30-60 s, which was enough time for the corner to reach the $T_C$ temperature. In different tests the nuclei were observed at different corners (including those lying on the PE foil) and the exact choice was probably governed by imperfections of the stabilized martensite. After the nucleation, the transition front formed and propagated through the specimen. The velocity of the transition front probably depended on the actual overheating of the specimen. For some runs of the experiment, it propagated at a few millimetres per second (comparable to the transition front propagating in a thermal gradient \cite{PrEASc}); for other runs, the whole specimen transformed fully within less than one second. This also supports the conjecture that the nucleation is affected by the local microstructure in the corners: if the nucleation barrier in one of the corners is lowered e.g. by imperfections in the stabilized martensite, the nucleation occurs earlier (i.e. at a lower temperature) and the transition front, which lowers the temperature of the material by the latent heat \cite{PrEASc}, propagates more slowly. 

In Fig.~\ref{fig3_HS}, snapshots from the observations are seen (\href{}{link to recorded video}). The transition fronts have morphologies of the interfacial microstructures described in \cite{PhaseTran} ($X-$ and $\lambda-$interfaces), in which the mechanically stabilized martensite is separated from austenite by a twinned region ensuring kinematical compatibility.

\begin{figure*}[ht]
\centering
\includegraphics[width=0.8\textwidth]{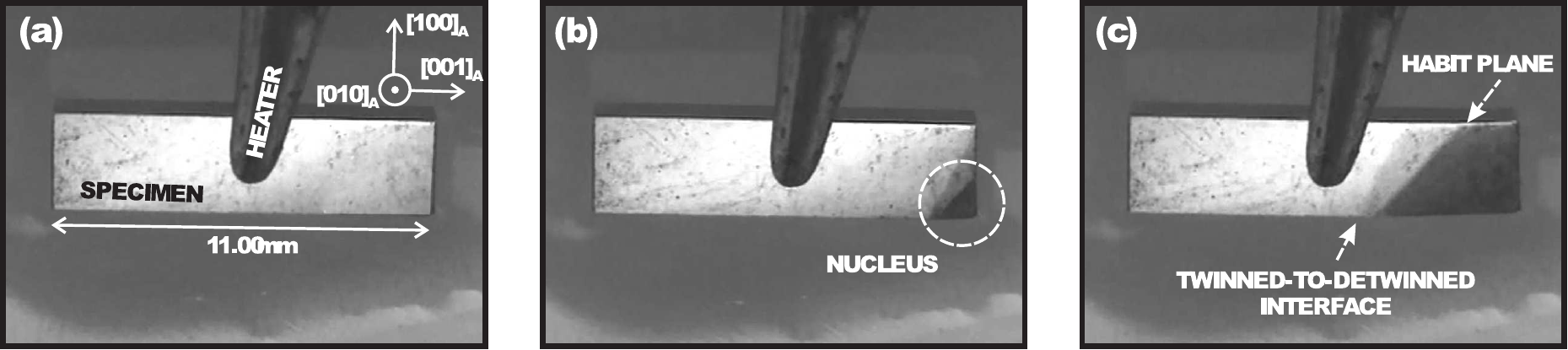}
\caption{Snapshots of the recorded video taken during the optical observations of the nucleation process. (a) the initial state with the length and crystallographic orientation of the specimen given in the coordinate system of the austenitic lattice (indicated by the subscript A); (b) formation of the nucleus at a corner (the first frame of the recorded video in which the nucleus was clearly visible); (c) the fully formed transition front propagating through the specimen. The morphology of the interfacial microstructure is outlined by the arrows indicating the austenite-to-twinned martensite interface (the habit plane) and the twinned-to-detwinned interface between the laminate and the stabilized martensite.}
\label{fig3_HS}
\end{figure*} 

\section{Nonlinear elasticity model: general and simplified}
\label{sec:model}

\subsection{General model}
\label{subsec:general}

The general nonlinear elasticity model~\cite{balljames87,balljames92}, which neglects interfacial energy, leads to the prediction of infinitely fine microstructures which are identified with limits of infimizing sequences $y^{k}$, $k=1,2,\ldots$, for a total free energy
\begin{equation}
E_{\theta}(y)=\int_{\Omega}\varphi(\nabla y(x),\theta)\,dx.\nonumber
\label{eq:E}
\end{equation}
Here, $\Omega$ represents the reference configuration of undistorted austenite at the critical temperature $\theta_{c}$ and $y(x)$ denotes the deformed position of the particle $x\in\Omega$. The free-energy function $\varphi(F,\theta)$ depends on the deformation gradient $F\in\,M^{3\times 3}$ and the temperature $\theta$ where $M^{3\times3}$ denotes the space of 3$\times$3 matrices. By frame indifference, $\varphi(RF,\theta)=\varphi(F,\theta)$ for all $F$, $\theta$ and for all rotations $R$; that is for all matrices in $SO(3)=\left\{R:R^{T}R=\mathbf{1}, \det{R}=1\right\}$. Let
\begin{equation}
K_{\theta}=\lbrace F:\varphi\left(G,\theta\right)\geq\varphi\left(F,\theta\right)\;\mathrm{for}\;\mathrm{all}\;\mathrm{matrices}\;G\rbrace\nonumber
\end{equation}
denote the set of energy-minimizing deformation gradients. Then we assume that
\[K_{\theta}=\left\{\begin{array}{ll}
\alpha\left(\theta\right)SO\left(3\right)\mbox{ - austenite}&\,\theta>\theta_{c}\\
SO\left(3\right)\cup\bigcup^{N}_{i=1}SO\left(3\right)U_{i}\left(\theta_c\right)&\,\theta=\theta_{c}\\
\bigcup^{N}_{i=1}SO\left(3\right)U_{i}\left(\theta\right)\mbox{ - martensite}&\,\theta<\theta_{c},
\end{array}\right.\]
where the positive definite, symmetric matrices $U_{i}\left(\theta\right)$ correspond to the $N$ distinct variants of martensite and $\alpha(\theta)$ is the thermal expansion coefficient of the austenite with $\alpha(\theta_{c})=1$.

However, information about the gradients of minimizing sequences $y^k$ for $E_{\theta}$ is lost in the limit $k\rightarrow\infty$ and a more convenient way to describe microstructure is via the use of gradient Young measures, which are families of probability measures $\nu=\left(\nu_{x}\right)_{x\in\Omega}$ generated by sequences of gradients $\nabla z^k$. Then we seek to minimize
\[I_{\theta}\left(\nu\right)=\int_{\Omega}\langle\nu_{x},\varphi\rangle\,dx=\int_{\Omega}\int_{M^{3\times3}}\varphi\left(A\right)\,d\nu_{x}\left(A\right)\]
over the space of gradient Young measures. In this case, the underlying (macroscopic) deformation gradient $\nabla z\left(x\right)$ corresponds to the centre of mass of $\nu$, $\nabla z\left(x\right)=\bar{\nu}_{x}=\langle\nu_{x},\mathrm{id}\rangle=\int_{M^{3\times3}}A\,d\nu_{x}\left(A\right)$ (see \cite{balljames92}). 

As an example of the use of Young measures, consider the $x$-independent measure $\nu_x=\lambda\delta_F+\left(1-\lambda\right)\delta_G$, for some $\lambda\in\left(0,1\right)$, supported on two rank-one connected matrices $F$ and $G=F+a\otimes n$ where $a$, $n$ are vectors and $\delta_{\cdot}$ denotes a Dirac mass. This Young measure is generated by gradients $\nabla z^k$ consisting of simple laminates formed from alternating layers with normal $n$ of width $\lambda k^{-1}$ and $\left(1-\lambda\right)k^{-1}$ in which $\nabla z^k$ takes the respective values $F$ and $G$ (see Fig.~\ref{fig:laminate}). At each $x$, $\nu_x$ gives the limiting probabilities $\lambda$, $1-\lambda$ as $k\rightarrow\infty$ of finding the matrices $F$ and $G$, respectively, in an infinitesimal neighbourhood of $x$. In this case, the macroscopic gradient is $\nabla z\left(x\right)=\bar{\nu}_{x}=\lambda F+\left(1-\lambda\right)G$.

\begin{figure}[ht]
	\centering
	\def\svgwidth{0.8\columnwidth}
	\begingroup
    \setlength{\unitlength}{\svgwidth}
  \begin{picture}(1,0.5)%
    \put(0,0){\includegraphics[width=\unitlength]{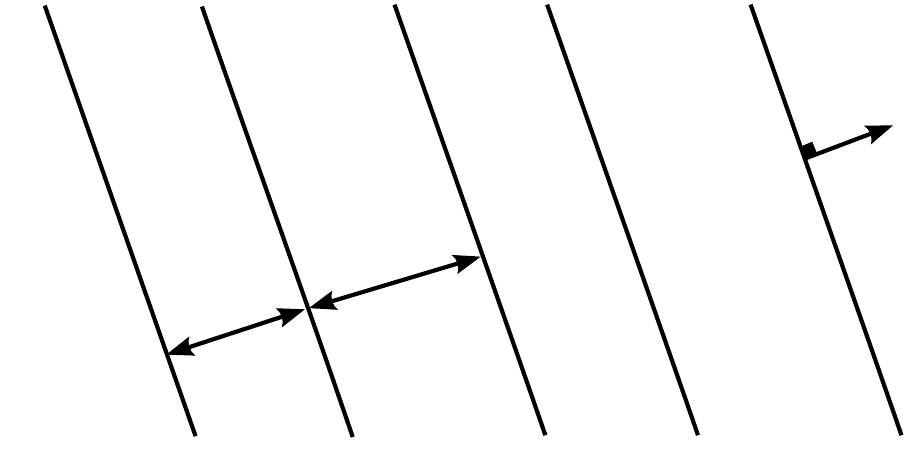}}%
    \put(0.18,0.3){\color[rgb]{0,0,0}\makebox(0,0)[lb]{\smash{$F$}}}%
    \put(0.37,0.3){\color[rgb]{0,0,0}\makebox(0,0)[lb]{\smash{$G$}}}%
    \put(0.57,0.3){\color[rgb]{0,0,0}\makebox(0,0)[lb]{\smash{$F$}}}%
    \put(0.76,0.3){\color[rgb]{0,0,0}\makebox(0,0)[lb]{\smash{$G$}}}%
    \put(0.95,0.38){\color[rgb]{0,0,0}\makebox(0,0)[lb]{\smash{$n$}}}%
    \put(0.26,0.07){\color[rgb]{0,0,0}\makebox(0,0)[lb]{\smash{$\frac{\lambda}{k}$}}}%
    \put(0.4,0.122){\color[rgb]{0,0,0}\makebox(0,0)[lb]{\smash{$\frac{1-\lambda}{k}$}}}%
    \put(0,0.2){\color[rgb]{0,0,0}\makebox(0,0)[lb]{\smash{$\nabla z^k=$}}}%
  \end{picture}%
\endgroup
	\caption{Sequence of gradients $\nabla z^{k}$ generating the $x$-independent Young measure $\nu_{x}=\lambda\delta_F+\left(1-\lambda\right)\delta_G$.}
	\label{fig:laminate}
\end{figure}

\subsection{Simplified model}
\label{subsec:simplified}

For our simplified model, we assume that $\theta>\theta_{c}$ and drop the explicit dependence on the temperature. Let $\Omega$ denote the Cu-Al-Ni bar in the austenite at $\theta=\theta_{c}$ and $\varphi:M^{3\times 3}_{+}\longrightarrow\mathbb{R}\cup\lbrace+\infty\rbrace$\footnote{$M^{3\times3}_{+}$ denotes the space of 3 by 3 matrices with positive determinant.} be the free-energy function for the material. Since $\theta>\theta_{c}$, we may assume that $\varphi$ is bounded below by some $-\delta<0$ and that
\begin{equation}
\varphi\left(F\right)=\left\{\begin{array}{ll}
-\delta&\,F\in\,SO\left(3\right)\\
0&\,F\in\bigcup^{6}_{i=1}SO\left(3\right)U_{i},
\end{array}\right.
\label{eq:phi}
\end{equation}
where the matrices $U_{i}$ correspond to the six martensitic variants for the cubic-to-orthorhombic transition of Cu-Al-Ni given by
\begin{equation}
\begin{array}{cc}
U_{1}=\left(\begin{array}{ccc}
\beta & 0 & 0\\0 & \frac{\alpha+\gamma}{2} & \frac{\alpha-\gamma}{2}\\0 & \frac{\alpha-\gamma}{2} & \frac{\alpha+\gamma}{2}\end{array}\right) & U_{2}=\left(\begin{array}{ccc}
\beta & 0 & 0\\0 & \frac{\alpha+\gamma}{2} & \frac{\gamma-\alpha}{2}\\0 & \frac{\gamma-\alpha}{2} & \frac{\alpha+\gamma}{2}\end{array}\right)
\end{array}\nonumber
\end{equation}
\begin{equation}
\begin{array}{cc}
U_{3}=\left(\begin{array}{ccc}
\frac{\alpha+\gamma}{2} & 0 & \frac{\alpha-\gamma}{2}\\0 & \beta & 0\\\frac{\alpha-\gamma}{2} & 0 & \frac{\alpha+\gamma}{2}\end{array}\right) & U_{4}=\left(\begin{array}{ccc}
\frac{\alpha+\gamma}{2} & 0 & \frac{\gamma-\alpha}{2}\\0 & \beta & 0\\\frac{\gamma-\alpha}{2} & 0 & \frac{\alpha+\gamma}{2}\end{array}\right)
\end{array}\nonumber
\end{equation}
\begin{equation}
\begin{array}{cc}
U_{5}=\left(\begin{array}{ccc}
\frac{\alpha+\gamma}{2} & \frac{\alpha-\gamma}{2} & 0\\\frac{\alpha-\gamma}{2} & \frac{\alpha+\gamma}{2} & 0\\0 & 0 & \beta\end{array}\right) &	U_{6}=\left(\begin{array}{ccc}
\frac{\alpha+\gamma}{2} & \frac{\gamma-\alpha}{2} & 0\\\frac{\gamma-\alpha}{2} & \frac{\alpha+\gamma}{2} & 0\\0 & 0 & \beta\end{array}\right).
\end{array}\nonumber
\end{equation}

In order to make the problem more tractable we work with an energy functional that captures the essential behaviour of $\varphi$ but becomes infinite off the energy wells
\[K:=SO\left(3\right)\cup\bigcup^{6}_{i=1}SO\left(3\right)U_{i}.\]
In particular, we employ $\Gamma$-convergence to rigorously derive this functional (see~\cite{ballkoumatos} for details). For $k=1,2,\ldots,$ let $\varphi^{k}=k\psi+\varphi$ where $\psi:M^{3\times3}\longrightarrow\mathbb{R}$ is a map such that $\psi\geq0$ and $\psi\left(A\right)=0$ if and only if $A\in\,K$. For a Young measure $\nu=\left(\nu_x\right)_{x\in\Omega}$ and eack $k=1,2,\ldots,$ define the energies $I^{k}\left(\nu\right)=\int_{\Omega}\langle\nu_x,\varphi^{k}\rangle\,dx$.

The idea behind $\Gamma$-convergence is to precisely introduce a suitable notion of `variational convergence' for which whenever $I^k$ $\Gamma$-converges to $I$ then $\min I=\lim_{k\rightarrow\infty}\inf I^{k}$ and if $\nu^{k}$ is a converging sequence such that $\lim_{k}I^{k}\left(\nu^{k}\right)=\lim_{k}\inf I^{k}$, then its limit is a minimum point for $I$; here, infima and minima are taken over the space of Young measures. In our case, one expects that as $k\rightarrow\infty$ the increasing term $k\psi$ will force the limiting energy to blow up everywhere outside $K$. Indeed, one can show that $I^{k}$ $\Gamma$-converges to

\begin{equation}
I\left(\nu\right)=\int_{\Omega}\langle\nu_{x},W\rangle\,dx=\int_{\Omega}\int_{M^{3\times3}}W\left(A\right)\,d\nu_{x}\left(A\right)dx,
\end{equation}
where $W\left(A\right)=\varphi\left(A\right)$ for all $A\in\,K$ and $W\left(A\right)=+\infty$ otherwise. Note that this energy forces minimizers to be supported entirely within the set $K$.

\section{Why nucleation can only occur at a corner}
\label{sec:result}

Let $U_{s}$ be the stabilized variant of martensite so that $\delta_{U_{s}}$ is the Young measure corresponding to a pure phase of that variant. In our minimization problem, we consider variations of $\delta_{U_{s}}$ which are localized in the interior, on faces, edges and at corners. More precisely, letting $B_{i},\,B_{f},\,B_{e},\,B_{c}$ be as in Fig.~\ref{fig:admissible}, we say that a measure $\nu=\left(\nu_x\right)_{x\in\Omega}$ is \textit{admissible} for the interior (resp. for a face, an edge, a corner) if $\nu_{x}=\delta_{U_{s}}$ outside $B_{i}$ (resp. $B_{f}$, $B_{e}$, $B_{c}$) and $\bar{\nu}_{x}=\nabla y\left(x\right)$ almost everywhere in $\Omega$ for some $y$ with $y\left(x\right)=U_{s}x$ on the boundary $\partial B_{i}$ of $B_{i}$ (resp. $\partial B_{f}\cap\Omega$, $\partial B_{e}\cap\Omega$, $\partial B_{c}\cap\Omega$)\footnote{Technically, $\nu$ is required to be a $W^{1,\infty}$ gradient Young measure meaning that it is generated by a sequence of gradients $\nabla z^k$ such that for some $M$, $\vert\nabla z^k\left(x\right)\vert\leq M<\infty$ for all $k$ and a.e. $x$; then the corresponding `weak limit' $z$ of $z^k$ also satisfies $\vert\nabla z\left(x\right)\vert\leq M$.}. For faces, edges and corners $\partial B_{f}\cap\partial\Omega$, $\partial B_{e}\cap\partial\Omega$ and $\partial B_{c}\cap\partial\Omega$ act as free boundaries.

\begin{figure}[ht]
	\centering
	\def\svgwidth{0.9\columnwidth}
	\begingroup
    \setlength{\unitlength}{\svgwidth}
  \begin{picture}(1,0.55363752)%
    \put(0,0){\includegraphics[width=\unitlength]{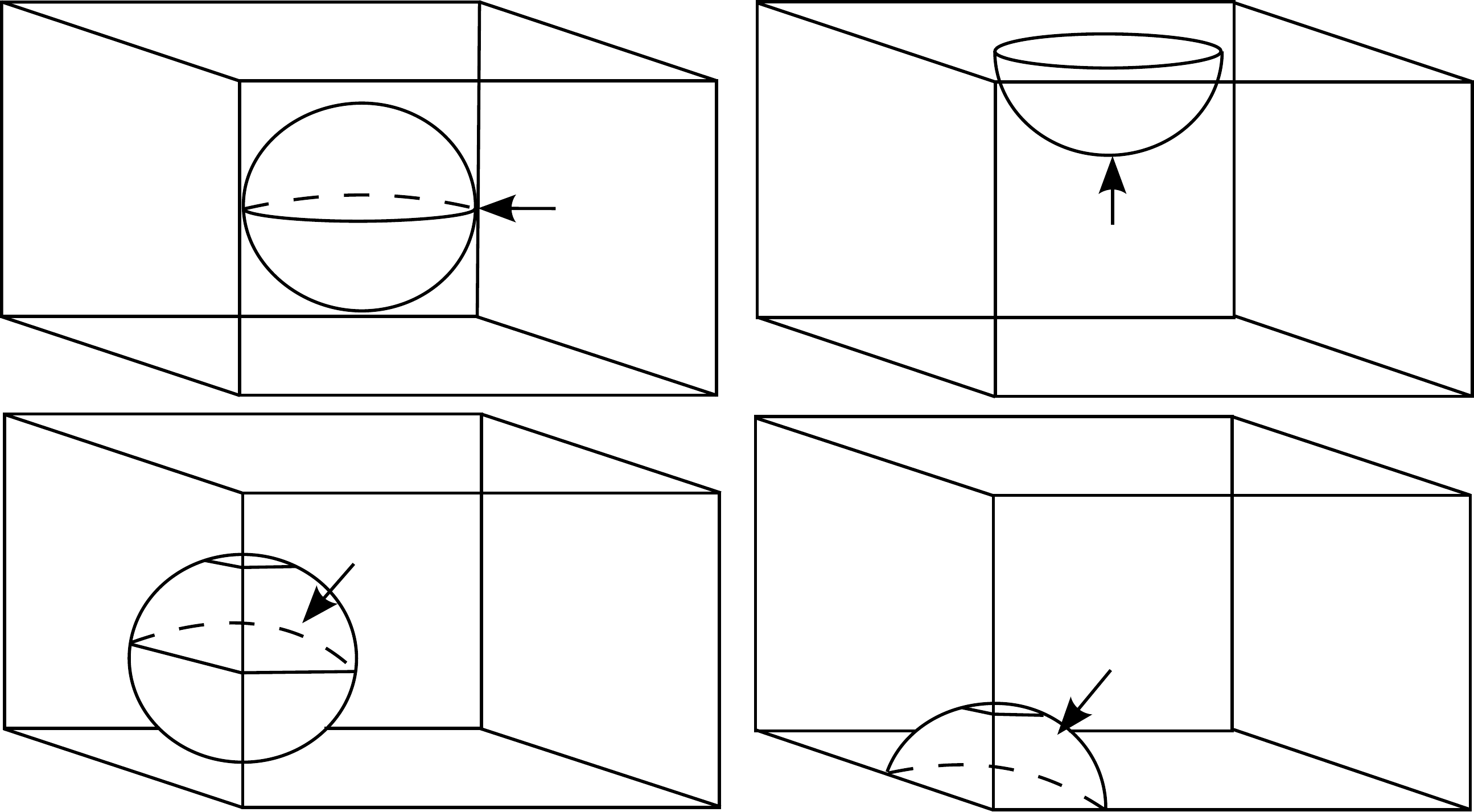}}%
    \put(0.38,0.4){\color[rgb]{0,0,0}\makebox(0,0)[lb]{\smash{$B_i$}}}%
    \put(0.737,0.365){\color[rgb]{0,0,0}\makebox(0,0)[lb]{\smash{$B_f$}}}%
    \put(0.24,0.17){\color[rgb]{0,0,0}\makebox(0,0)[lb]{\smash{$B_e$}}}%
    \put(0.74,0.1){\color[rgb]{0,0,0}\makebox(0,0)[lb]{\smash{$B_c$}}}%
    \put(0.014,0.46){\color[rgb]{0,0,0}\makebox(0,0)[lb]{\smash{interior}}}%
    \put(0.545,0.46){\color[rgb]{0,0,0}\makebox(0,0)[lb]{\smash{face}}}%
    \put(0.017,0.18){\color[rgb]{0,0,0}\makebox(0,0)[lb]{\smash{edge}}}%
    \put(0.54,0.18){\color[rgb]{0,0,0}\makebox(0,0)[lb]{\smash{corner}}}%
  \end{picture}%
\endgroup
	\caption{Subsets of $\Omega$ used for testing whether nucleation of austenite can occur in the interior, on a face, an edge and at a corner; these are given respectively by the intersection of $\Omega$ with a small ball centred at a point in the interior, on a face, an edge or a corner.}
	\label{fig:admissible}
\end{figure}

We also assume that $\det\,U_{s}\leq1$ and that
\begin{equation}
\int_{\Omega}\det\nabla y\left(x\right)\,dx\leq\mathrm{vol}\left(y\left(\Omega\right)\right)
\label{eq:cn}
\end{equation}
for any map $y$ underlying an admissible measure $\nu$, i.e. $\nabla y\left(x\right)=\bar{\nu}_{x}$. Condition (\ref{eq:cn}) was introduced by Ciarlet and Ne\v{c}as~\cite{cn} as a way to describe non-interpenetration of matter. We denote the sets of admissible measures $\nu=\left(\nu_x\right)_{x\in\Omega}$ for the interior, faces, edges and corners by $\mathcal{A}_{i}$, $\mathcal{A}_{f}$, $\mathcal{A}_{e}$ and $\mathcal{A}_{c}$ respectively.

For $s=1,\ldots\,,6$ and $S^{2}=\lbrace e\in\mathbb{R}^{3}:\vert e\vert =1\rbrace$, the unit sphere, let
\begin{eqnarray}
\mathcal{M}_{s}&=&\lbrace e\in S^{2}:\vert U_{s}e\vert=\max_{i}\lbrace\vert U_{i}e\vert ,1\rbrace\rbrace\quad\mbox{and}\nonumber\\
\mathcal{M}^{-1}_{s}&=&\lbrace e\in S^{2}:\vert\mathrm{cof}\,U_{s}e\vert > \max_{i\neq s}\lbrace\vert\mathrm{cof}\,U_{i}e\vert ,1\rbrace\rbrace\cup\lbrace e_{\max}(\mathrm{cof}\,U_s)\rbrace,\nonumber
\end{eqnarray}
where, for $F\in\,M^{3\times3}$, $\mathrm{cof}\,F$ stands for the matrix of all 2$\times$2 subdeterminants of $F$, $e_{\max}(F)$ stands for the eigenvector of $F$ corresponding to its largest eigenvalue and $\vert F\vert=\sqrt{\mathrm{Tr}\,F^{T}F}$ denotes the Euclidean norm in $M^{3\times3}$.
\newtheorem{thm}{Theorem}
\begin{thm}
\cite{ballkoumatos} Let $\Omega$ be a parallelepiped (not necessarily rectangular) with edges in the direction of vectors in $\mathcal{M}_{s}\cup U^{-2}_{s}\mathcal{M}^{-1}_{s}$. Assume that there exists a Young measure $\nu\in\displaystyle\mathcal{A}_{i}\cup\mathcal{A}_{f}\cup\mathcal{A}_{e}\cup\mathcal{A}_{c}$ such that $I\left(\nu\right)<I\left(\delta_{U_{s}}\right)$. Then, $\nu\in\mathcal{A}_{c}$.
\label{thm:main}
\end{thm}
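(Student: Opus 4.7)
The plan is to argue by contradiction. Suppose $\nu\in\mathcal{A}_i\cup\mathcal{A}_f\cup\mathcal{A}_e$ satisfies $I(\nu)<I(\delta_{U_s})=0$. Because $W=+\infty$ off $K$, the support of $\nu_x$ lies in $K$ for a.e.\ $x$; and because $\varphi=-\delta$ on $SO(3)$ while $\varphi=0$ on each martensitic well, $I(\nu)<0$ is equivalent to $\nu_x$ placing positive mass on $SO(3)$ on a set of positive measure. The strategy is to prove that the underlying macroscopic deformation must satisfy $\nabla y=U_s$ a.e.\ on $\Omega$, and then use the Ciarlet--Ne\v{c}as inequality together with $\det U_s\leq 1$ to rule out any austenite mass, giving $I(\nu)=0$ and the desired contradiction.

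The first tool is a pair of pointwise polyconvex upper bounds. Since $F\mapsto|Fe|$ is convex (it is a norm) and each component of $F\mapsto\mathrm{cof}\,F$ is quasi-affine, Jensen's inequality for gradient Young measures with $\mathrm{supp}\,\nu_x\subset K$ yields, for a.e.\ $x\in\Omega$ and every $e\in S^2$,
\[
|\nabla y(x)e|\leq\max_i\{|U_ie|,1\},\qquad |\mathrm{cof}(\nabla y(x))e|\leq\max_i\{|\mathrm{cof}\,U_ie|,1\}.
\]
Specialised to the extremal sets, these become $|\nabla y\cdot e|\leq|U_se|$ for $e\in\mathcal{M}_s$ and $|\mathrm{cof}(\nabla y)\cdot e|\leq|\mathrm{cof}(U_s)\cdot e|$ for $e\in\mathcal{M}_s^{-1}$.

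The geometry of $\Omega$ is exploited next. Admissibility forces $\nabla y=U_s$ on $\Omega\setminus B$ and, after matching constants across $\partial B\cap\Omega$, $y=U_s x$ on $\partial\Omega\setminus B$. For each edge direction $e\in\mathcal{M}_s$ of $\Omega$, I would integrate $\nabla y\cdot e$ along straight lines parallel to $e$ whose two endpoints lie in the Dirichlet region (these cover all of $\Omega$ for the interior case and all lines avoiding $B$ in the face/edge cases): the fundamental theorem of calculus gives $\int_0^L\nabla y\cdot e\,ds=L U_se$, and the triangle inequality combined with $|\nabla y\cdot e|\leq|U_se|$ forces $\nabla y\cdot e=U_s e$ a.e.\ on each such line, hence on the cylinder they sweep out. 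For each edge direction $e\in U_s^{-2}\mathcal{M}_s^{-1}$, the dual 2D argument integrates $(\mathrm{cof}\,\nabla y)\cdot f$ over a planar cross-section $S$ of $\Omega$ transverse to $e$, where, given the parallelepiped structure, the unit normal $f$ to $S$ is parallel to $U_s^2 e\in\mathcal{M}_s^{-1}$. The Piola identity $\mathrm{Div}\,\mathrm{cof}(\nabla y)=0$ together with Stokes' theorem identifies $\int_S(\mathrm{cof}\,\nabla y)\cdot f\,dA$ with the vector-area functional $\tfrac{1}{2}\oint_{\partial S}y\times dy$, which depends only on $y|_{\partial S}$; since $\partial S$ lies on faces of $\Omega$ where $y=U_s x$, this integral equals $|S|\,\mathrm{cof}(U_s)\cdot f$, and the triangle inequality combined with the cofactor bound forces $(\mathrm{cof}\,\nabla y)\cdot f=\mathrm{cof}(U_s)\cdot f$ a.e.\ on the slab.

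Since the three edge directions of $\Omega$ span $\mathbb{R}^3$ and lie in $\mathcal{M}_s\cup U_s^{-2}\mathcal{M}_s^{-1}$, these pointwise identities determine $\nabla y=U_s$ a.e.\ on $\Omega$. Integrating against the Dirichlet data yields $y(x)=U_s x$ and $|y(\Omega)|=(\det U_s)|\Omega|$. Because $\det F=1$ on $SO(3)$ and $\det F=\det U_s$ on every martensitic well, and because $\det$ is a null Lagrangian so that $\int_\Omega\langle\nu_x,\det F\rangle\,dx=\int_\Omega\det\bar\nu_x\,dx=(\det U_s)|\Omega|$, the identity $\langle\nu_x,\det F\rangle=\det U_s+\nu_x(SO(3))(1-\det U_s)$ combined with $\det U_s\leq 1$ forces $\nu_x(SO(3))=0$ a.e.\ (the borderline $\det U_s=1$ being handled by the Ciarlet--Ne\v{c}as inequality, which yields equality of volumes and, via injectivity of $y$, excludes coexistence). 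Hence $\langle\nu_x,W\rangle=0$ a.e.\ and $I(\nu)=0$, contradicting $I(\nu)<0$. The hard part is the cofactor step for the face and edge cases: one must select, for each $U_s^{-2}\mathcal{M}_s^{-1}$-edge, a family of cross-sections $S$ whose full boundary $\partial S$ lies in the Dirichlet region $\{y=U_s x\}$, and the geometric hypothesis on the edges of $\Omega$ is precisely what makes this possible.
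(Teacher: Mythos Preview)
Your overall strategy---reduce to $\bar\nu_x=U_s$ a.e.\ and then use the minors relation for $\det$ to kill $\nu_x(SO(3))$---is the same as the paper's, and your line-integral argument for edge directions in $\mathcal{M}_s$ is sound. The problems lie in the two places where you depart from the paper.

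First, the cofactor/cross-section step for edge directions in $U_s^{-2}\mathcal{M}_s^{-1}$ does not work as written. For a parallelepiped with edges $e_1,e_2,e_3$, the planar cross-section transverse to $e_3$ whose boundary lies on the lateral faces has unit normal proportional to $e_1\times e_2$, not to $U_s^2 e_3$; there is no ``parallelepiped structure'' that makes these coincide. Hence the Jensen bound $|\mathrm{cof}(\nabla y)f|\le|\mathrm{cof}(U_s)f|$, which requires $f\in\mathcal{M}_s^{-1}$, is simply unavailable for the cross-section you need. Worse, even if the bound did hold, the constraint you would extract, $\mathrm{cof}(\nabla y)(e_1\times e_2)=\mathrm{cof}(U_s)(e_1\times e_2)$, is already a consequence of $\nabla y\,e_1=U_se_1$ and $\nabla y\,e_2=U_se_2$ via $\mathrm{cof}(A)^T(u\times v)=Au\times Av$, so it carries no information about $\nabla y\,e_3$. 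The paper does not use a 2D argument here: it asserts (with details deferred to \cite{ballkoumatos}) that \emph{one-dimensional} line segments in any direction of $\mathcal{M}_s\cup U_s^{-2}\mathcal{M}_s^{-1}$ with both endpoints on the prescribed boundary must deform like $U_sx$. It is for this 1D claim in the $U_s^{-2}\mathcal{M}_s^{-1}$ directions---not for a cross-section argument---that the hypotheses $\det U_s\le 1$ and the Ciarlet--Ne\v{c}as condition enter, and it is this claim that lets one cover $B_f$ and $B_e$ by lines parallel to the relevant face or edge.

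Second, your handling of the borderline $\det U_s=1$ is not correct. Once $\bar\nu_x=U_s$, the identity $\langle\nu_x,\det\rangle=\det U_s$ places no constraint on $\nu_x(SO(3))$ when $\det U_s=1$, and Ciarlet--Ne\v{c}as merely returns the tautology $|y(\Omega)|=|\Omega|$; injectivity of $y=U_sx$ says nothing about the Young measure. The paper closes this case with a second convexity argument: from $\alpha\beta\gamma=1$ and $U_s\neq\mathbf{1}$ the AM--GM inequality gives $|U_s|^2>3=|\mathbf{1}|^2$, and Jensen for the convex map $F\mapsto|F|^2$ then yields $\nu_x(SO(3))(|U_s|^2-3)\le 0$, hence $\nu_x(SO(3))=0$.
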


\begin{proof}[Proof (sketch)]
Let $\Omega$ be as in the statement and let $\nu=\left(\nu_x\right)_{x\in\Omega}$ be an element of $\mathcal{A}_{i}\cup\mathcal{A}_{f}\cup\mathcal{A}_{e}\cup\mathcal{A}_{c}$ such that $I\left(\nu\right)<I\left(\delta_{U_{s}}\right)$. We first show that $\nu\notin\mathcal{A}_{i}$. Note that since $I\left(\delta_{U_{s}}\right)=0$ we may assume that $\mathrm{supp}\,\nu_{x}\subset K$ as otherwise $I\left(\nu\right)=+\infty$ and the result is trivial. By averaging the measure $\nu$ (see \cite{ballkoumatos}) we may also assume that $\nu$ is an $x$-independent Young measure and $\bar{\nu}=U_{s}$ without altering the energy $I\left(\nu\right)$. The minors relation for the determinant (see e.g. \cite{balljames92}, \cite{bhattacharya}) says that $\det\bar{\nu}=\langle\nu,\det\rangle$ and hence,
\begin{equation}
\det U_{s}=\int_{SO\left(3\right)}\det A\;d\nu\left(A\right)+\int_{\bigcup_{i}SO\left(3\right)U_{i}}\det A\;d\nu\left(A\right)\nonumber
\end{equation}
\begin{equation}
=\int_{SO\left(3\right)}1\,d\nu\left(A\right)+\int_{\bigcup_{i}SO\left(3\right)U_{i}}\det U_{s}\;d\nu\left(A\right)
\label{eq:proof1}
\end{equation}
since $\det U_l=\det U_s$ for all $l$. Also, $\nu$ is a probability measure, i.e. $\int_{K}\,d\nu\left(A\right)=1$, so that
\begin{equation}
\det U_{s}=\int_{SO\left(3\right)}\det U_{s}\;d\nu\left(A\right)+\int_{\bigcup_{i}SO\left(3\right)U_{i}}\det U_{s}\;d\nu\left(A\right)\nonumber
\end{equation}
and subtracting from (\ref{eq:proof1}), \[\int_{SO\left(3\right)}\left(1-\det U_{s}\right)\;d\nu\left(A\right)=0.\]

Hence, $\nu\left(SO\left(3\right)\right)=\int_{SO\left(3\right)}\,d\nu\left(A\right)=0$ or $\det\,U_{s}=1$. The former case leads to a contradiction as then \[I\left(\nu\right)=\int_{\Omega}\int_{\bigcup_{i}SO\left(3\right)U_i}W\left(A\right)d\nu\left(A\right)dx=0=I\left(\delta_{U_{s}}\right).\]
So, let $\det\,U_{s}=\alpha\beta\gamma=1$. By the AM-GM inequality
\begin{equation}
\frac{\vert U_{s}\vert^{2}}{3}=\frac{\alpha^{2}+\beta^{2}+\gamma^{2}}{3}\geq\left(\alpha^{2}\beta^{2}\gamma^{2}\right)^{1/3}=1\nonumber
\end{equation}
and thus $\vert U_{s}\vert^{2}>3=\vert\mathbf{1}\vert^{2}$. Note that the inequality is strict as otherwise $\alpha=\beta=\gamma=1$ and $U_{i}=\mathbf{1}$ for all $i=1,\ldots\,,6$. The map $F\mapsto \vert F\vert^{2}$ is convex and so $\vert\bar{\nu}\vert^2\leq\langle\nu,\vert\cdot\vert^2\rangle$. Then
\begin{equation}
\vert U_{s}\vert ^2\leq\int_{SO\left(3\right)}\vert A\vert^{2}\;d\nu\left(A\right)+\int_{\bigcup_{i}SO\left(3\right)U_{i}}\vert A\vert^{2}\;d\nu\left(A\right)\nonumber
\end{equation}
\begin{equation}
=\int_{SO\left(3\right)}3\;d\nu\left(A\right)+\int_{\bigcup_{i}SO\left(3\right)U_{i}}\vert U_{s}\vert^{2}\;d\nu\left(A\right)
\label{eq:proof3}
\end{equation}
since the norm does not change on martensitic variants. As $\nu$ is a probability measure,
\[\vert U_{s}\vert^{2}=\int_{SO\left(3\right)}\vert U_{s}\vert^{2}\;d\nu\left(A\right)+\int_{\bigcup_{i}SO\left(3\right)U_{i}}\vert U_{s}\vert^{2}\;d\nu\left(A\right)\]
and subtracting from (\ref{eq:proof3}), \[\int_{SO\left(3\right)}\left(\vert U_{s}\vert^{2}-3\right)\;d\nu\left(A\right)\leq0.\] However, $\vert U_{s}\vert^{2}>3$ and hence, $\nu\left(SO\left(3\right)\right)=0$ completing the case of the interior. Note that the proof does not utilize (\ref{eq:cn}) or the condition that $\det\,U_{s}\leq1$; these are only relevant for faces and edges. Also, the result for the interior does not dependent on the orientation of $\Omega$.

As for faces or edges, we wish to deduce that $\nu$ cannot be an element of $\mathcal{A}_{f}$ or $\mathcal{A}_{e}$. The proofs, though similar, are more involved and we refer the reader to \cite{ballkoumatos} for details. The proofs essentially rely on showing that whenever a line segment joins points on the prescribed part of the boundary $\partial B_{f}\cap\Omega$ or $\partial B_{e}\cap\Omega$ of $B_{f}$ or $B_{e}$, respectively, and lies in the direction of a vector in $\mathcal{M}_{s}\cup U^{-2}_{s}\mathcal{M}^{-1}_{s}$, then it must necessarily deform like $U_{s}x$ under any map $y$ underlying an admissible measure $\nu\in\mathcal{A}_{f}$ or $\mathcal{A}_{e}$. 

If the normal to a face is perpendicular to, or an edge is in the direction of, a vector in $\mathcal{M}_{s}\cup U^{-2}_{s}\mathcal{M}^{-1}_{s}$, the sets $B_{f}$ or $B_{e}$ can then be covered by such line segments so that $y\left(x\right)=U_{s}x$ in $\Omega$. But this means that $\bar{\nu}_{x}=U_{s}$ and in a manner very similar to the proof for the interior, we can show that this implies $I\left(\nu\right)=0$, i.e. for all $\nu\in\mathcal{A}_{f}$ or $\mathcal{A}_{e}$, $I\left(\nu\right)\geq I\left(\delta_{U_{s}}\right)$ and no admissible measure for a face or edge can lower the energy.
\end{proof}

On the other hand, a specific construction shows that for the Cu-Al-Ni specimen of this paper and some corners (see \cite{ballkoumatos} for details) there exists a measure $\nu\in\mathcal{A}_{c}$ such that $I\left(\nu\right)<I\left(\delta_{U_{s}}\right)$. In this construction (see Fig.~\ref{fig:min_corner}) the measure $\nu$ takes the value $\delta_R$ in a small region at a corner, for some $R\in\,SO\left(3\right)$. The rotation $R$ can itself form a compatible interface with a simple laminate as in Fig.~\ref{fig:laminate} with $F=U_{s}$ and $G=QU_{l}$ for some variant chosen to form the interface with $R$. This laminate can trivially also form a compatible interface with a pure phase of the variant $U_{s}$ and serves as the interfacial microstructure interpolating between $R$ (austenite) and $U_{s}$ making the entire microstructure compatible. Note that since the measure $\nu$ is supported on $SO\left(3\right)$ it must indeed lower the energy. Then, Theorem~\ref{thm:main} combined with the existence of an admissible measure in $\mathcal{A}_{c}$ that lowers the energy imply that nucleation must, and does, occur at a corner.

\begin{figure}[ht]
	\centering
	\def\svgwidth{0.8\columnwidth}
	\begingroup
    \setlength{\unitlength}{\svgwidth}
  \begin{picture}(1,0.59959741)%
    \put(0,0){\includegraphics[width=\unitlength]{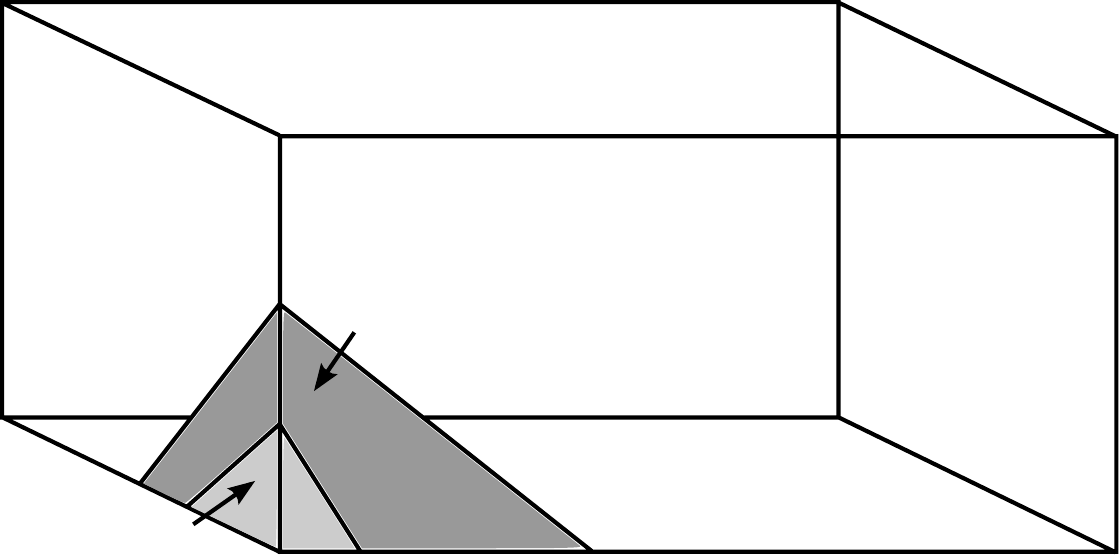}}%
    \put(0.25,0.42){\color[rgb]{0,0,0}\makebox(0,0)[lb]{\smash{$\nu_x=\delta_{U_{s}}$}}}%
    \put(0.025,0.00887148){\color[rgb]{0,0,0}\makebox(0,0)[lb]{\smash{$\nu_x=\delta_R$}}}%
    \put(0.27,0.215){\color[rgb]{0,0,0}\makebox(0,0)[lb]{\smash{$\nu_x=\lambda\delta_{U_{s}}+\left(1-\lambda\right)\delta_{QU_{l}}$}}}%
  \end{picture}%
\endgroup
	\caption{Depiction of a measure $\nu\in\mathcal{A}_{c}$ such that $I\left(\nu\right)<I\left(\delta_{U_{s}}\right)$. In the light shaded region $\nu_x=\delta_R$ for some $R\in\,SO\left(3\right)$ so that austenite has nucleated at a corner; in the dark shaded region $\nu_x=\lambda\delta_{U_{s}}+\left(1-\lambda\right)\delta_{QU_{l}}$ for some $Q\in\,SO\left(3\right)$ and $l\in\lbrace1,\ldots,6\rbrace$ such that the matrices $R$ and $\lambda U_{s}+\left(1-\lambda\right)QU_{l}$ are rank-one connected, i.e. $\nu_x$ corresponds to a simple laminate between $U_{s}$ and $QU_{l}$ there forming a compatible interface with $R$. Note that the normals to the interfaces between austenite and the simple laminate (habit plane) and between the simple laminate and the pure phase of $U_{s}$ (twinned-to-detwinned interface) are different.}
	\label{fig:min_corner}
\end{figure}

\section{Remarks and conclusions}
\label{sec:RandC}

For a general energy functional of the form \[\int_{\Omega}W\left(\nabla y\left(x\right)\right)\,dx,\] known necessary conditions for a map $y$ to be a local minimizer are that $W$ is quasiconvex at $\nabla y\left(x_0\right)$ for all $x_0$ in the interior - quasiconvexity in the interior (Meyers \cite{meyers65}) - and at the boundary (faces) of $\Omega$ - quasiconvexity at the boundary (Ball and Marsden \cite{ballmarsden}). Recently, Grabovsky and Mengesha~\cite{grabovsky2009} showed that, along with the satisfaction of the Euler-Lagrange equations and the positivity of the second variation, strengthened versions of the quasiconvexity conditions are in fact sufficient for $y$ to be a local minimizer; however, they showed this under smoothness assumptions on $W$ and also on the domain $\Omega$ which do not allow for edges or corners.

In our work, the condition that
\[I\left(\nu\right)\geq I\left(\delta_{U_{s}}\right)\quad\mbox{for all $\nu\in\mathcal{A}_{i}$ (resp. $\mathcal{A}_{f},\,\mathcal{A}_{e}$ and $\mathcal{A}_{c}$)}\]
is the appropriate expression of quasiconvexity at $U_{s}$ in the interior (resp. on faces, edges and corners). Then a way of interpreting Theorem~\ref{thm:main} is that $W$ is quasiconvex at $U_{s}$ in the interior, at the boundary (faces) and edges but not at corners, so that $U_s$ is a local minimizer in the interior, on faces and edges with respect to the localized variations defined before. We note that, to the best of the authors' knowledge, quasiconvexity conditions at edges and corners have not been considered before (see~\cite{ballkoumatos}).

The sets $\mathcal{M}_{s}$ and $\mathcal{M}^{-1}_{s}$ depend on the specific change of symmetry of the crystal lattice and, hence, on the lattice parameters of the material. For a range of parameters (see \cite{ballkoumatos} for details), including those of the specimen studied here, the above sets have explicit representations making our result applicable to a variety of parallelepipeds; for $s=1,2$ these are given by
\[\mathcal{M}_{s}=\lbrace e\in S^{2}:\left(-1\right)^{s-1}e_{2}e_{3}\geq0,\vert e_{1}\vert\leq\min\lbrace\vert e_{2}\vert ,\vert e_{3}\vert\rbrace\rbrace,\]
\[\mathcal{M}^{-1}_{s}=\lbrace e\in S^{2}:\left(-1\right)^{s-1}e_{2}e_{3}< 0,\vert e_{1}\vert >\max\lbrace\vert e_{2}\vert ,\vert e_{3}\vert\rbrace\rbrace\cup (1,0,0)^T\]
whereas for $s=3,4$ and $s=5,6$ we simply interchange $e_{1}$ with $e_{2}$ and $e_{3}$ respectively. In particular, our result applies to the Cu-Al-Ni specimen of this paper for any $s=1,\ldots\,,6$. However, for these lattice parameters, $\mathcal{M}_{s}\cup U^{-2}_{s}\mathcal{M}^{-1}_{s}$ does not exhaust the unit sphere. Hence our result leaves open the possibility that for different specimens nucleation could occur at a face or an edge.

It is worth noting that the same nucleation mechanism was observed for a Cu-Al-Ni specimen stabilized as a compound twin. This microstructure is also not able to form directly compatible interfaces with austenite and our methods may be applicable to this case as well.

Lastly, similar situations in which the incompatibility of gradients results in hysterisis have been documented before in different contexts, e.g.~\cite{ballchujames}. There, though in a different way, the mathematical analysis argues that despite the existence of a state with lower energy than a certain martensitic variant, it is necessarily geometrically incompatible with it, giving rise to an energy barrier which keeps the specific martensitic state stable. In general, in the context of microstructure formation, the incompatibility of gradients gives rise to very rich and interesting phenomena, such as the first genuinely non-classical austenite-martensite interfaces observed by Seiner and Landa~\cite{seinerlanda}, where austenite was able to form stress-free interfaces with a double laminate of martensite. In \cite{icomat08}, the reader can find further details as well as a relevant mathematical analysis.

\section*{Acknowledgement}

J.~M.~Ball and K.~Koumatos were supported by the EPSRC New Frontiers in the Mathematics of Solids (OxMOS) programme (EP/D048400/1) and the EPSRC award to the Oxford Centre for Nonlinear PDE (EP/E035027/1). H.~Seiner was supported by the Czech Science Foundation (project No.GAP107/10/0824) and the Institute of Thermomechanics ASCR v.v.i. (CEZ:AV0Z20760514).

\end{document}